\numberwithin{equation}{section}
\newtheorem{theorem}{Theorem}[section]
\newtheorem{lemma}[theorem]{Lemma}
\newtheorem{corollary}[theorem]{Corollary}
\theoremstyle{definition}
\newtheorem{definition}[theorem]{Definition}
\newtheorem{example}[theorem]{Example}
\newtheorem{remark}[theorem]{Remark}
\newtheorem{notation}[theorem]{Notation}
\theoremstyle{example}
\newcommand\Supp{\operatorname{Supp}}
\newcommand\Rad{\operatorname{Rad}}
\newcommand\Ker{\operatorname{Ker}}
\newcommand\Coker{\operatorname{Coker}}
\newcommand{\au}{\underline a}
\newcommand{\q}{\mathfrak{q}}
\newcommand\grade{\operatorname{grade}}
\newcommand\depth{\operatorname{depth}}
\newcommand{\Cech}{{\Check {C}}_{\au}}
\author[Khadam]{M. Azeem Khadam}
\address{Abdus Salam School of Mathematical Sciences, GC University Lahore, 68-B, New Muslim Town, Lahore 54600, Pakistan}
\email{azeem.khadam@sms.edu.pk}
\thanks{The author is supported by ASSMS, GC University Lahore under a postdoctoral fellowship.}
\title[Cohen Macaulayness of the form Module]{A criterion
	of Cohen Macaulayness of the form module}
\begin{document}

\begin{abstract} 
Let $\mathfrak{q}$ be an ideal of a Noetherian local ring $(A,\mathfrak{m})$ and $M$ a non-zero finitely generated $A$-module. We present a criterion of Cohen-Macaulayness of the form module $G_M(\q)$ in terms of (non-)vanishing of a variation of local cohomology introduced in \cite{KSch}.
\end{abstract}

\subjclass[2010]
{Primary: 13D45; Secondary: 13H10}
\keywords{local cohomology, Cohen-Macaulay module, form module, depth}

\maketitle

\section{Introduction} 
Let $(A,\mathfrak{m})$ be a Noetherian local ring and $M$ a non-zero finitely generated $A$-module. Let $\mathfrak{q}$ be an ideal of $A$ and $\au = a_1,\ldots,a_t$ a system of elements of $A$ such that $a_i \in \mathfrak{q}^{c_i}, c_i\in\mathbb{N},$ for
$i = 1,\ldots,t$. Also, assume that $n\in \mathbb{N}$. In \cite[Section 6]{KSch}, Schenzel and the author of the current manuscript have introduced a variation $\check{L}^i(\au,\mathfrak{q},M;n)$ of the local cohomology module $H^i_{\au A}(M)$, see Definition \ref{var3}. It turned out that $\check{L}^i(\au,\mathfrak{q},M;n)$ is also a generalization of $H^i_{\au A}(M)$: for the case $\q=\au A$, we have
\[
H^i_{\au A}(M) \cong \check{L}^i(\au,\au A,M;n) \text{ for all } n \gg 0 \text{ and } i \geq 0,
\]
see \cite[Remark 6.5]{KSch}. That is, in a certain sense $\check{L}^i(\au,\mathfrak{q},M;n)$ provides
some additional structure on the usual local cohomology module $H^i_{\au A}(M)$.

Furthermore, there is a well known criterion of Cohen-Macaulayness of $M$ in terms of (non-)vanishing of local cohomology modules. More precisely, $M$ is a Cohen-Macaulay $A$-module if and only if there is only one integer $i$ such that $H^i_{\mathfrak{m}}(M)\not=0$, see \cite[Corollary 6.2.9]{BS}.
The aim of this note is to present a similar criterion to check Cohen-Macaulayness of the form module $G_M(\q)=\oplus_{n \geq 0}\q^nM/\q^{n+1}M$ in terms of (non-)vanishing of $\check{L}^i(\au,\mathfrak{q},M;n)$. To this end assume that $\grade(\mathfrak{Q}, G_M(\q))$ denote the $G_M(\q)-grade$ of an ideal $\mathfrak{Q}$ of the form ring $G_A(\q)=\oplus_{n \geq 0}\q^n/\q^{n+1}$ which is defined as the length of a maximal homogeneous $G_M(\q)$--sequence in $\mathfrak{Q}$. Also assume that $a_i^\star=a_i+\q^{c_i+1}$ for $i=1,\ldots,t$ with $a_i\in \q^{c_i}\setminus \q^{c_i+1}$ and $\au^\star=a_1^\star,\ldots,a_t^\star$. We have the following main result:

\begin{theorem}\label{intro1}
	With the previous notations, $\grade(\au^\star G_A(\q), G_M(\q))$ is the least integer $i$ such that $\check{L}^i(\au,\q,M;n) \not= 0$ for some $n$.
\end{theorem}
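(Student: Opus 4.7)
The plan is to reduce the statement to a standard characterization of grade, combined with an identification of the graded pieces of a local cohomology module. Concretely, for a Noetherian (graded) ring $R$, a homogeneous ideal $\mathfrak{Q}$, and a finitely generated (graded) $R$-module $N$, one has
\[
\grade(\mathfrak{Q},N) \;=\; \min\{\, i : H^i_{\mathfrak{Q}}(N) \neq 0 \,\}.
\]
Applied with $R = G_A(\q)$, $\mathfrak{Q}= \au^\star G_A(\q)$, and $N = G_M(\q)$, this reduces Theorem \ref{intro1} to showing
\[
\min\{\, i : H^i_{\au^\star G_A(\q)}(G_M(\q)) \neq 0 \,\} \;=\; \min\{\, i : \check{L}^i(\au,\q,M;n) \neq 0 \text{ for some } n \,\}.
\]
Since $G_M(\q)$ is a graded $G_A(\q)$-module and $\au^\star G_A(\q)$ is homogeneous, the module $H^i_{\au^\star G_A(\q)}(G_M(\q))$ carries a natural grading, and a graded module vanishes if and only if each of its homogeneous components does. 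So it suffices to identify, for every $n$, the $n$-th graded piece of this local cohomology with $\check{L}^i(\au,\q,M;n)$.

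The central step is therefore to establish an isomorphism
\[
[H^i_{\au^\star G_A(\q)}(G_M(\q))]_n \;\cong\; \check{L}^i(\au,\q,M;n)
\]
of (abelian groups, and in fact of) $A$-modules, functorially in $i$ and $n$. I would attack this by computing the left-hand side through the Čech complex $\Cech_{\au^\star}$ on the homogeneous system $\au^\star = a_1^\star,\ldots,a_t^\star$ applied to $G_M(\q)$, and then comparing its graded strands in degree $n$ with the complex whose cohomology defines $\check{L}^i(\au,\q,M;n)$ in \cite[Section 6]{KSch}. The construction in \cite{KSch} is expected to match precisely the homogeneous localizations that enter the Čech complex of $G_M(\q)$: the localization $(G_M(\q))_{a_i^\star}$ in degree $n$ should correspond to the $(\au,\q,M;n)$-data used to build $\check{L}^i$.

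The principal obstacle is this identification of graded pieces, and in particular keeping track of the mixed degrees $c_i$: the $a_i^\star$ are homogeneous of possibly different degrees $c_i$, so the Čech differentials shift degree in a nontrivial way, and one must verify that the resulting degree-$n$ subcomplex of $\Cech_{\au^\star}(G_M(\q))$ agrees, as a complex of $A$-modules, with the one defining $\check{L}^\bullet(\au,\q,M;n)$. Once this is checked, taking cohomology yields the required isomorphism; specializing to the case $\q = \au A$ should recover the known identification $\check{L}^i(\au,\au A,M;n) \cong H^i_{\au A}(M)$ for $n \gg 0$ noted in \cite[Remark 6.5]{KSch}, giving a useful consistency check. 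Combining this isomorphism with the grade characterization above then completes the proof.
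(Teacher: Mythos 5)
Your reduction hinges on the identification $[H^i_{\au^\star G_A(\q)}(G_M(\q))]_n \cong \check{L}^i(\au,\q,M;n)$, and this is precisely where the argument fails: that isomorphism is not true. By Remark \ref{var2}, the subcomplex $\check{C}^{\bullet}(\au,\q,M;n)$ is the degree-$n$ strand of the \v{C}ech complex of the \emph{Rees} module $R_M(\q)$ with respect to $a_1T^{c_1},\ldots,a_tT^{c_t}$, not of the form module, and $\check{\mathcal{L}}^{\bullet}(\au,\q,M;n)$ is the quotient of $\Cech(M)$ by it; its terms are $M_{a_{j_1}\cdots a_{j_i}}/\q^nM[\q^{c_{j_1}+\cdots+c_{j_i}}/(a_{j_1}\cdots a_{j_i})]$, which are not the degree-$n$ components of the homogeneous localizations $(G_M(\q))_{a_{j_1}^\star\cdots a_{j_i}^\star}$. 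Already at $i=0$ the two sides disagree: by Remark \ref{r1}, $\check{L}^0(\au,\q,M;n)\cong\bigl(\cap_{i=1}^t (\q^{n+lc_i}M:_Ma_i^l)\bigr)/\q^nM$ for $l\gg0$, a subquotient of $M/\q^nM$, whereas $[\Gamma_{\au^\star G_A(\q)}(G_M(\q))]_n=\bigl(\q^nM\cap\bigcap_{i=1}^t(\q^{n+lc_i+1}M:_Ma_i^l)\bigr)/\q^{n+1}M$ is a submodule of $\q^nM/\q^{n+1}M$. For $A=k[[x,y]]/(xy)$, $\q=\mathfrak{m}$, $t=1$, $a=x$, one checks $\check{L}^0(x,\mathfrak{m},A;1)=0$ while $[\Gamma_{x^\star}(G_A(\mathfrak{m}))]_1\neq0$, so there is not even a degreewise isomorphism. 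The theorem does imply that the first non-vanishing index of the family $\{\check{L}^i(\au,\q,M;n)\}_n$ coincides with that of $H^i_{\au^\star G_A(\q)}(G_M(\q))$, but that coincidence is the content of the theorem, not an input one may assume; any honest comparison would have to pass through the two short exact sequences of Notation \ref{not-1}(D) relating $R_M(\q)_+$, $R_M(\q)$, $G_M(\q)$ and $M$, producing long exact sequences with connecting maps rather than isomorphisms of graded pieces.

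The paper avoids this entirely and argues by induction on $g=\grade(\au^\star G_A(\q),G_M(\q))$. The base case is Lemma \ref{c4}, a direct computation showing that $\au^\star G_A(\q)$ contains a $G_M(\q)$-regular element if and only if $\check{L}^0(\au,\q,M;n)=0$ for all $n$. For the inductive step one picks a $G_M(\q)$-regular element $b^\star$ with $b\in\au A$ (possible by \cite[Lemma 4.2]{K}), uses $G_M(\q)/b^\star G_M(\q)\cong G_{M/bM}(\q)$ (Valabrega--Valla), and invokes the exact sequence of complexes from Corollary \ref{c3}; the resulting long exact sequence gives $\check{L}^{g-1}(\au,\q,M/bM;n+d)\hookrightarrow\check{L}^g(\au,\q,M;n+d)$, hence non-vanishing at $i=g$, while for $i<g$ multiplication by $b$ is injective on $\check{L}^i$ and simultaneously locally nilpotent because $\Supp\check{L}^i(\au,\q,M;n)\subseteq V(\au A)$ and $b\in\au A$, forcing $\check{L}^i=0$. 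If you wish to salvage your outline, you would need to replace the asserted graded identification with this kind of long-exact-sequence bookkeeping, which essentially reproduces the paper's induction.
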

We denote by $\mathfrak{M}=\mathfrak{m}/\q \oplus (\oplus_{n > 0}\q^n/\q^{n+1})$ the unique maximal homogeneous ideal of $G_A(\q)$. Then, $\grade(\mathfrak{M}, G_M(\q))$ is denoted by $\depth G_M(\q)$ and referred as the \emph{depth} of $G_M(\q)$. Note that if $a_1^\star,\ldots,a_t^\star$ is a system of parameters of $G_M(\q)$, then $\grade(\au^\star G_A(\q), G_M(\q))=\depth G_M(\q)$. 

\begin{corollary}\label{intro2}
	With the previous notations, if $a_1^\star,\ldots,a_t^\star$ is a system of parameters of $G_M(\q)$, then any integer $i\geq0$ for which $\check{L}^i(\au,\mathfrak{q},M;n)\not=0$, for some $n$, must satisfy
	\[
	\depth G_M(\q)\leq i\leq \dim G_M(\q).
	\]
	Moreover, $\check{L}^i(\au,\mathfrak{q},M;n)\not=0$, for some $n$, for $i=\depth G_M(\q)$ or $i= \dim G_M(\q)$.
\end{corollary}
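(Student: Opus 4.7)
The plan is to deduce the corollary entirely from Theorem \ref{intro1}, combined with two structural facts about $\check{L}^i(\au,\q,M;n)$ inherited from its construction in \cite{KSch}.

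First I would handle the lower bound and the non-vanishing at $i=\depth G_M(\q)$. Since $a_1^\star,\ldots,a_t^\star$ is a system of parameters of $G_M(\q)$, the ideal $\au^\star G_A(\q)+\Ann_{G_A(\q)}G_M(\q)$ has the same radical as $\mathfrak{M}+\Ann_{G_A(\q)}G_M(\q)$. Since grade depends only on this radical,
\[
\grade(\au^\star G_A(\q),G_M(\q))=\grade(\mathfrak{M},G_M(\q))=\depth G_M(\q).
\]
Applying Theorem \ref{intro1} then gives simultaneously that $\depth G_M(\q)\leq i$ whenever $\check{L}^i(\au,\q,M;n)\neq 0$ and that $\check{L}^i(\au,\q,M;n)\neq 0$ for some $n$ at $i=\depth G_M(\q)$.

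Next I would establish the upper bound $i\leq\dim G_M(\q)=t$. This is expected to be a formal consequence of Definition \ref{var3}: the module $\check{L}^i(\au,\q,M;n)$ is defined as the $i$-th cohomology of a Čech-type complex built on the $t$-term sequence $\au$, which is concentrated in cohomological degrees $0,\ldots,t$. Hence $\check{L}^i(\au,\q,M;n)=0$ automatically for $i>t=\dim G_M(\q)$.

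The remaining piece, and the main obstacle, is non-vanishing at the top index $i=\dim G_M(\q)$. Here the strategy is to relate $\check{L}^t(\au,\q,M;n)$ to the top local cohomology $H^t_{\au^\star G_A(\q)}(G_M(\q))$ on the associated graded side via the comparison developed in \cite{KSch}. Since $\au^\star$ is a system of parameters of the finitely generated $G_A(\q)$-module $G_M(\q)$, it is well known that $H^t_{\au^\star G_A(\q)}(G_M(\q))\neq 0$. Checking that the bridge from \cite{KSch} transports this non-vanishing into non-vanishing of at least one graded piece $\check{L}^t(\au,\q,M;n)$ finishes the argument. The delicate point is precisely this transfer, which is why identifying the correct graded/spectral-sequence comparison from \cite{KSch} is the crucial step; everything else is formal.
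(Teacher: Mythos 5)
Your treatment of the lower bound is exactly the paper's: the observation that $\grade(\au^\star G_A(\q),G_M(\q))=\depth G_M(\q)$ when $\au^\star$ is a system of parameters, combined with Theorem \ref{intro1}, gives both $\depth G_M(\q)\leq i$ and the non-vanishing at $i=\depth G_M(\q)$. Your argument for vanishing above $t$ (the complex $\check{\mathcal{L}}^{\bullet}(\au,\q,M;n)$ is concentrated in cohomological degrees $0,\dots,t$) is also correct and is in fact more elementary than the paper, which cites \cite[Theorem 9.5]{KSch} for this.

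The genuine gap is the non-vanishing at the top index $i=\dim G_M(\q)=t$, which you yourself flag as ``the delicate point'' and do not carry out. Your proposed route --- deducing it from $H^t_{\au^\star G_A(\q)}(G_M(\q))\neq 0$ via some graded or spectral-sequence ``bridge'' from \cite{KSch} --- is not a comparison that the paper establishes or that is obviously available: the only identifications in play relate $\check{H}^i(\au,\q,M;n)$ to graded pieces of $H^i_{\underline{aT^c}}(R_M(\q))$ (the Rees module, not the form module), and $\check{L}^i$ to the cokernel/kernel terms in the long exact sequence of Definition \ref{var3} involving $H^i_{\au A}(M)$. Note also that Theorem \ref{intro1} alone cannot give the top non-vanishing when $G_M(\q)$ is not Cohen--Macaulay, since it only locates the \emph{least} non-vanishing index. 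The paper's actual argument is different and shorter: since $\au^\star$ is a system of parameters of $G_M(\q)$, the sequence $\au$ is a system of parameters of $M$, and then \cite[Theorem 9.5]{KSch} is invoked directly to conclude that $\check{L}^t(\au,\q,M;n)\neq 0$ for some $n$ (and that $\check{L}^i=0$ for $i>t$); this works on the level of $H^t_{\au A}(M)=H^t_{\mathfrak{m}}(M)$ rather than on the associated graded side. As written, your proof of the final assertion is incomplete at precisely the step that requires the external input.
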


\begin{corollary}\label{intro3}
With the previous notation, if $a_1^\star,\ldots,a_t^\star$ is a system of parameters of $G_M(\q)$, then there is only one integer $i$ for which $\check{L}^i(\au,\mathfrak{q},M;n)\not=0$, for some $n$, if and only if $G_M(\q)$ is a Cohen-Macaulay $G_A(\q)-$module.
\end{corollary}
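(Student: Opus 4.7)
\textbf{Proof plan for Corollary \ref{intro3}.}
The plan is to obtain this as an essentially immediate consequence of Corollary \ref{intro2}. Recall that a finitely generated graded module over a Noetherian graded local ring is Cohen-Macaulay precisely when its depth (with respect to the maximal homogeneous ideal) equals its Krull dimension, so the task reduces to translating the one-index hypothesis into the equality $\depth G_M(\q) = \dim G_M(\q)$.

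First I would introduce the abbreviation $\mathcal{S} = \{\, i \geq 0 : \check{L}^i(\au,\q,M;n)\neq 0 \text{ for some } n\,\}$ for the set of nonvanishing indices. By the first assertion of Corollary \ref{intro2} (applied under the standing hypothesis that $a_1^\star,\ldots,a_t^\star$ is a system of parameters of $G_M(\q)$), we have the containment
\[
\mathcal{S} \subseteq \bigl[\,\depth G_M(\q),\; \dim G_M(\q)\,\bigr].
\]
By the second assertion of the same corollary, both endpoints of this interval belong to $\mathcal{S}$. Consequently $|\mathcal{S}|=1$ if and only if the interval degenerates to a point, i.e.\ $\depth G_M(\q)=\dim G_M(\q)$, which by the recollection above is exactly the statement that $G_M(\q)$ is a Cohen-Macaulay $G_A(\q)$-module.

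I do not expect a substantive obstacle: all the real content is in Theorem \ref{intro1} and Corollary \ref{intro2}, and what remains is the standard equivalence between the Cohen-Macaulay property of the graded module $G_M(\q)$ and the equality of its depth and dimension. The only point worth double-checking is that the ``depth'' appearing in Corollary \ref{intro2} coincides with $\grade(\mathfrak{M},G_M(\q))$ in the Cohen-Macaulay criterion; this is guaranteed by the remark preceding Corollary \ref{intro2} identifying $\grade(\au^\star G_A(\q),G_M(\q))$ with $\depth G_M(\q)$ whenever $a_1^\star,\ldots,a_t^\star$ is a system of parameters.
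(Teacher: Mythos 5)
Your proposal is correct and matches the paper's (essentially implicit) argument: Corollary \ref{c7} is stated there as an immediate consequence of Corollary \ref{c6}, exactly as you derive it, with the set of nonvanishing indices pinned between $\depth G_M(\q)$ and $\dim G_M(\q)$ and both endpoints realized. Your only-worth-checking point about $\grade(\au^\star G_A(\q),G_M(\q))=\depth G_M(\q)$ under the system-of-parameters hypothesis is indeed the remark the paper makes just before the corollary, so nothing is missing.
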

The proofs of \ref{intro1}, \ref{intro2} and \ref{intro3} are given in \ref{c5}, \ref{c6} and \ref{c7} respectively. Note that for the proof of the above results, we follow the ideas from the book \cite{BS}.

This paper is organized as follows: Section 2 deals with basic notions from Commutative Algebra. The basic sources for this are \cite{AM,hM}. For results on Homological Algebra, we refer to \cite{cW}. We also discuss local cohomology in Section 2 which was developed in \cite{aG} (see also \cite{BS}). Section 3 is devoted to a variation of local cohomology introduced in \cite{KSch}. A further investigation around Lichtenbaum-Hartshorne Theorem in terms of this variation of local cohomology is in progress.

{\bf Acknowledgments: }The author is thankful to the reviewer for useful comments and suggestions. We are deeply grateful to Peter Schenzel for inspiring discussions during visit of the author at Max-Planck Institute of Mathematics in the Sciences, Leipzig (MPI MIS) in September 2018 and then later for useful comments and suggestions. We are thankful to Mateusz Micha\l ek and MPI MIS for the invitation and support for the visit to the institute. We are also thankful to Alex Dimca for discussion and helpful comments during the CIMPA School, Hanoi in March 2019. The author wants to express his gratitude to Tiberiu Dumitrescu and to his wife Aqsa Bashir for useful comments during the preparation of this manuscript.

\section{Preliminaries}
In this section, we fix notations which we use in the rest of the manuscript. Basics on $\mathbb{N}$-graded structures may be found in \cite{GW}.

\begin{notation} \label{not-1}
	(A) We denote by $A$ a commutative Noetherian ring with $0 \not= 1$ and $\mathfrak{q}$ an ideal of $A$. We write by $\au=a_1,\ldots,a_t$ a system of elements of $A$ such that $a_i\in\q^{c_i}, c_i\in\mathbb{N},$ for $i=1,\ldots,t$ and $M$ a non-zero finitely generated $A$-module.\\
	(B) We consider the Rees and form rings of $A$ with respect 
	to $\mathfrak{q}$ by 
	\[
	R_A(\mathfrak{q}) = \oplus_{n \geq 0} \mathfrak{q}^n \,T^n \subseteq A[T] \,
	\text{ and }\, G_A(\mathfrak{q}) = \oplus_{n \geq 0} \mathfrak{q}^n/\mathfrak{q}^{n+1}.
	\]
	Here $T$ denotes an indeterminate over $A$. Note that both rings are naturally $\mathbb{N}$-graded. We define 
	the Rees and form modules in the corresponding way by 
	\[
	R_M(\mathfrak{q}) = \oplus_{n \geq 0} \mathfrak{q}^n M \,T^n \subseteq M[T] \,
	\text{ and } \, G_M(\mathfrak{q}) = \oplus_{n \geq 0} \mathfrak{q}^nM/\mathfrak{q}^{n+1}M.
	\]
	Note that $R_M(\mathfrak{q})$ is a graded $R_A(\mathfrak{q})$-module and 
	$G_M(\mathfrak{q})$ a graded $G_A(\mathfrak{q})$-module. Also, $R_A(\mathfrak{q})$ 
	and $G_A(\mathfrak{q})$ are both Noetherian rings, and $R_M(\mathfrak{q})$ respectively $G_M(\mathfrak{q})$ is finitely generated 
	over $R_A(\mathfrak{q})$ respectively $G_A(\mathfrak{q})$.\\
	(C) Let $(A,\mathfrak{m})$ be a Noetherian local ring and $M$ a finitely generated $A$-module. It is well known that
	$\dim G_M(\q) = \dim M$.\\
	(D) There are the following two short exact sequences of graded modules
	\begin{gather*}
	0 \to R_M(\mathfrak{q})_{+}[1] \to R_M(\mathfrak{q}) \to G_M(\mathfrak{q}) \to 0 \text{ and }\\
	0 \to R_M(\mathfrak{q})_{+} \to R_M(\mathfrak{q}) \to M \to 0,
	\end{gather*}
	where $R_M(\mathfrak{q})_{+} = \oplus_{n > 0} \mathfrak{q}^n M \,T^n$. \\
	(E) If $m \in M$ such that $m \in \mathfrak{q}^c M \setminus \mathfrak{q}^{c+1} M$, then we define  
	$m^{\star} := m + \mathfrak{q}^{c+1} M \in [G_M(\mathfrak{q})]_c$ which is called the \emph{initial element} of $m$ in $G_M(\mathfrak{q})$ of the \emph{initial degree} $c$. If $m \in \cap_{n \geq 1} \mathfrak{q}^n M$, 
	then we write $m^{\star} = 0$.
	Here $[X]_n, n \in \mathbb{N},$ denotes the $n$-th graded component of an $\mathbb{N}$-graded module $X$.
\end{notation}
For these and related results, we refer to \cite{GW} and \cite{SH}. Another feature of our investigation is the use of Koszul and \v{C}ech complexes.

\begin{remark} \label{kos-1}
	(A) Let $\underline{a} = a_1,\ldots,a_t$ be a system of elements of a ring $A$. We define the \v{C}ech complex of $\au = a_1,\ldots,a_t$ by 
	\[
	\Cech : 0 \to \Cech^0 \to \ldots \to \Cech^i \to \ldots \to \Cech^t \to 0 \; \text{ with } \;
	\Cech^i = \oplus_{1 \leq j_1 < \ldots < j_i \leq t} A_{a_{j_1}\cdots a_{j_i}},
	\]
	where the differential $d^i : \Cech^i \to \Cech^{i+1}$ is given at the component 
	$A_{a_{j_1}\cdots a_{j_i}} \to A_{a_{j_1}\cdots a_{j_{i+1}}}$ by $(-1)^{k+1}$ times the 
	natural map $A_{a_{j_1}\cdots a_{j_i}} \to (A_{a_{j_1}\cdots a_{j_i}})_{a_{j_k}}$ if $\{j_1, \ldots,j_i\} 
	= \{j_1,\ldots,\widehat{j_k}, \ldots,j_{i+1}\}$ and zero otherwise. Moreover, we write 
	$\Cech(M) = \Cech \otimes_A M$. \\
	(B) The importance of the \v{C}ech complex is its relation to the local cohomology. Namely, let 
	$\mathfrak{q} = (a_1,\ldots,a_t)A$ be an ideal generated by a sequence $\au=a_1,\ldots,a_t$. The local 
	cohomology module $H^i_{\mathfrak{q}}(M)$ is defined as the $i$-th right derived functor 
	$H^i_{\mathfrak{q}}(M)$ of the section functor $\Gamma_{\mathfrak{q}}(M) = \{ m \in M\,|\,\Supp_A Am \subseteq V(\mathfrak{q})\}$. 
	Then there are natural isomorphisms 
	\[
	H^i_{\mathfrak{q}}(M) \cong H^i(\Cech \otimes_A M) \cong \varinjlim H^i(\au^n;M)
	\]
	for any $i \in \mathbb{Z}$, see \cite[Theorem 3.5.6]{BH} or \cite[Theorem 1.3]{pS}. As a consequence, it follows that $H^i(\Cech \otimes_A M)$ depends 
	only on the radical $\Rad \au A$.
\end{remark}	

\section{A variation of local cohomology}

In this section, we define a variation of local cohomology as introduced in \cite[Section 6]{KSch}. We also present a few properties of it which we need in the next section. To this end, assume that $M[\mathfrak{q}^c/a] \subseteq M_a$, when $a\in\q^c$ and $c\in \mathbb{N}$, consists of all elements of the form 
$m_0/1+q_1 m_1/a + \ldots + q_s m_s/a^s$ for some $s \geq 0$, $m_0,\ldots,m_s \in M$ and 
$q_i \in \mathfrak{q}^{ic}$ for $i = 1,\ldots,s$.


\begin{notation} \label{var1}(Khadam-Schenzel \cite[Notation 6.2 (A)]{KSch})
	Let $n\in\mathbb{N}$ be an integer. We define a complex $\check{C}^{\bullet}(\au,\mathfrak{q},M;n)$ in the following way:
	\begin{itemize}
		\item[(a)] For $0 \leq i \leq t$, put 
		$$
		\check{C}^i(\au,\mathfrak{q},M;n) = 
		\oplus_{1 \leq j_1 < \ldots < j_i \leq t} \mathfrak{q}^n M[\mathfrak{q}^{c_{j_1}+ \ldots + c_{j_i}}/(a_{j_1}\cdots a_{j_i})] \subseteq \oplus_{1 \leq j_1 < \ldots < j_i \leq t} M_{a_{j_1}\cdots a_{j_i}}
		$$
		and $\check{C}^i(\au,\mathfrak{q},M;n) = 0$ for $i < 0$ and $i > t$.
		\item[(b)] The boundary map $\check{C}^i(\au,\mathfrak{q},M;n) \to \check{C}^{i+1}(\au,\mathfrak{q},M;n)$ 
		is given by the restriction of the boundary map $\Cech(M) \to \Cech(M)$, see remark \ref{kos-1}(D).
	\end{itemize}
	Note that the restriction is a boundary map on $\check{C}^{\bullet}(\au,\mathfrak{q},M;n)$.
	We write $\check{H}^i(\au,\mathfrak{q},M;n), i\in \mathbb{Z},$ for the $i$-th cohomology of $
	\check{C}^{\bullet}(\au,\mathfrak{q},M;n)$.
	By the construction, it is clear that
	$\check{C}^{\bullet}(\au,\mathfrak{q},M;n)$ is a subcomplex of the \v{C}ech complex $\Cech(M)$.
\end{notation}

Another way of the construction is the following:

\begin{remark} \label{var2}
	Let $R_A(\mathfrak{q})$ and $R_M(\mathfrak{q})$ be the Rees ring and the Rees module respectively, and $\underline{aT^c} = a_1T^{c_1}, \ldots,a_tT^{c_t}$ a system of elements in $R_A(\mathfrak{q})$, where $a_i T^{c_i} \in [R_A(\mathfrak{q})]_{c_i}$ for $i = 1,\ldots,t$. Note that the degree $n$-component $\check{C}_{\underline{aT^c}}(R_M(\mathfrak{q}))_n$ 
	of the \v{C}ech complex $\check{C}_{\underline{aT^c}}(R_M(\mathfrak{q}))$ is the complex $\check{C}^{\bullet}(\au,\mathfrak{q},M;n)$ as introduced 
	in \ref{var1}. Hence $H^i_{\underline{aT^c}}(R_M(\mathfrak{q}))_n \cong \check{H}^i(\au,\mathfrak{q},M;n)$ for all $i$, see \cite[Section 6]{KSch} for details.
\end{remark}

\begin{definition}\label{var3}(\cite[Notation 6.2 (B)]{KSch})
	With the previous notations, we write $\check{\mathcal{L}}^{\bullet}(\au,\mathfrak{q},M;n)$ for the quotient of the embedding $\check{C}^{\bullet}(\au,\mathfrak{q},M;n) \to \Cech(M)$ of complexes. Whence,	there is a short exact sequence of complexes
	\[
	0 \to \check{C}^{\bullet}(\au,\mathfrak{q},M;n) \to \Cech(M) \to 
	\check{\mathcal{L}}^{\bullet}(\au,\mathfrak{q},M;n) \to 0.
	\]
	We write $\check{L}^i(\au,\mathfrak{q},M;n), i\in \mathbb{Z},$ for the $i$-th cohomology of 
	$\check{\mathcal{L}}^{\bullet}(\au,\mathfrak{q},M;n)$. Therefore, there is a long exact sequence of local cohomologies
	\[
	\ldots \to  \check{H}^i(\au,\mathfrak{q},M;n) \to H^i_{\au A}(M) \to \check{L}^i(\au,\mathfrak{q},M;n) \to \ldots.
	\]
\end{definition}



Next, we investigate the behaviour of these cohomologies under localization.

\begin{lemma}\label{var5}
	With the previous notations, if $S$ is a multiplicatively closed set in $A$, then
		\begin{gather*}
	S^{-1}\check{H}^i(\au,\mathfrak{q},M;n) \cong \check{H}^i(S^{-1}{\au},S^{-1}\mathfrak{q},S^{-1}M;n) \text{ and }\\
	S^{-1}\check{L}^i(\au,\mathfrak{q},M;n) \cong \check{L}^i(S^{-1}\au,S^{-1}\mathfrak{q},S^{-1}M;n)
	\end{gather*}
	for all $i$ and $n$, where $S^{-1}\au = a_1/1,\ldots,a_t/1$ is the system of elements in $S^{-1}A$.
\end{lemma}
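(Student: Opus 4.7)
The plan is to reduce both isomorphisms to a single statement at the level of complexes, namely that localization at $S$ commutes with the formation of the sub-\v{C}ech complex $\check{C}^{\bullet}(\au,\mathfrak{q},M;n)$; exactness of localization then takes care of the cohomology and of the quotient complex $\check{\mathcal{L}}^{\bullet}$.

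The technical heart is the verification of the natural isomorphism
\[
S^{-1}\bigl(\mathfrak{q}^n M[\mathfrak{q}^{c}/a]\bigr) \;\cong\; (S^{-1}\mathfrak{q})^n\, S^{-1}M\bigl[(S^{-1}\mathfrak{q})^{c}/(a/1)\bigr]
\]
as submodules of $S^{-1}(M_a)\cong (S^{-1}M)_{a/1}$, for any $a\in\mathfrak{q}^c$. An element on the left has the form $\bigl(m_0/1+q_1m_1/a+\cdots+q_sm_s/a^s\bigr)/s'$ with $s'\in S$, $m_j\in\mathfrak{q}^nM$ and $q_j\in\mathfrak{q}^{jc}$, which rewrites term-by-term as $(m_0/s')/1+(q_1/1)(m_1/s')/(a/1)+\cdots$, exhibiting it as an element of the right-hand side; the reverse inclusion is obtained by clearing the common denominator from $S$. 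Applying this componentwise gives
\[
S^{-1}\check{C}^i(\au,\mathfrak{q},M;n)\;\cong\;\check{C}^i(S^{-1}\au,S^{-1}\mathfrak{q},S^{-1}M;n),
\]
and these isomorphisms are compatible with the boundary maps, because the latter are inherited from the ordinary \v{C}ech complex, which is known to commute with localization. Since $S^{-1}$ is exact, it commutes with passage to cohomology of complexes of $A$--modules, yielding the first stated isomorphism for $\check{H}^i$.

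For the second isomorphism, I would localize the short exact sequence of complexes from Definition \ref{var3},
\[
0 \to \check{C}^{\bullet}(\au,\mathfrak{q},M;n) \to \Cech(M) \to \check{\mathcal{L}}^{\bullet}(\au,\mathfrak{q},M;n) \to 0.
\]
By exactness of $S^{-1}$ the sequence remains short exact, and by the step above together with the classical identification $S^{-1}\Cech(M)\cong\check{C}_{S^{-1}\au}(S^{-1}M)$, the first two terms become the ones appearing in the short exact sequence defining $\check{\mathcal{L}}^{\bullet}(S^{-1}\au,S^{-1}\mathfrak{q},S^{-1}M;n)$. Consequently the third term is canonically isomorphic to $\check{\mathcal{L}}^{\bullet}(S^{-1}\au,S^{-1}\mathfrak{q},S^{-1}M;n)$; taking cohomology and using once more the exactness of $S^{-1}$ gives the claim for $\check{L}^i$.

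The only non-formal step is the submodule identification of the first paragraph; everything afterwards is a routine consequence of the exactness of $S^{-1}$ applied to short exact sequences of complexes and to the cohomology functor. In particular no five-lemma argument is needed, since the chain of isomorphisms is built directly at the complex level.
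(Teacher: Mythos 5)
Your proof is correct, but it takes a genuinely different route from the paper's. The paper does not touch the complexes directly: it invokes the identification of Remark \ref{var2}, $\check{H}^i(\au,\mathfrak{q},M;n)\cong H^i_{\underline{aT^c}}(R_M(\mathfrak{q}))_n$, observes that $S^{-1}R_M(\mathfrak{q})\cong R_{S^{-1}M}(S^{-1}\mathfrak{q})$, and then applies flat base change for local cohomology to get the first isomorphism; the second is deduced from the long exact sequence of Definition \ref{var3} (which implicitly requires a naturality/five-lemma argument to pass from the outer terms to $\check{L}^i$). You instead prove the elementary submodule identification $S^{-1}(\mathfrak{q}^n M[\mathfrak{q}^c/a])=(S^{-1}\mathfrak{q})^n S^{-1}M[(S^{-1}\mathfrak{q})^c/(a/1)]$ inside $(S^{-1}M)_{a/1}$ and then let exactness of $S^{-1}$ do all the remaining work at the level of complexes and of the short exact sequence of complexes. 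Your key step checks out: the forward inclusion is the term-by-term rewriting you describe, and the reverse inclusion follows by clearing a common denominator, using $(S^{-1}\mathfrak{q})^{n+jc}=S^{-1}(\mathfrak{q}^{n+jc})$. What each approach buys: the paper's is shorter given the Rees-module machinery already installed, and reuses a standard theorem (flat base change); yours is self-contained, avoids the five lemma entirely by producing the isomorphism of quotient complexes directly, and makes transparent exactly which finiteness-free identity about $S^{-1}$ is being used. Both are valid; if you wanted to align with the paper you could note that your submodule identification is precisely the degree-$n$ part of the statement $S^{-1}\check{C}_{\underline{aT^c}}(R_M(\mathfrak{q}))\cong \check{C}_{S^{-1}\underline{aT^c}}(R_{S^{-1}M}(S^{-1}\mathfrak{q}))$.
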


\begin{proof}
	By considering $S^{-1} R_M(\q)$ as localization of an $A$-module, we get $S^{-1} R_M(\q) \cong R_{S^{-1}M}(S^{-1} \q)$. By flat base change of local cohomology, we get
	\[
	S^{-1} H^i_{\underline{aT^c}}(R_M(\mathfrak{q})) \cong H^i_{S^{-1} \underline{aT^c}}(R_{S^{-1}M}(S^{-1} \q)) \text{ and } 
	S^{-1} H^i_{\au A}(M) \cong H^i_{S^{-1}\au A}(S^{-1}M)
	\] 
	for all $i \in \mathbb{Z}$.
	Since
	\begin{gather*}
	[S^{-1} H^i_{\underline{aT^c}}(R_M(\mathfrak{q}))]_n \cong S^{-1}\check{H}^i(\au,\mathfrak{q},M;n) \text{ and }\\
	[H^i_{S^{-1} \underline{aT^c}}(R_{S^{-1}M}(S^{-1} \q))]_n \cong \check{H}^i(S^{-1}{\au},S^{-1}\mathfrak{q},S^{-1}M;n),
	\end{gather*}
	this proves the first isomorphisms. The second one follows by using the long exact sequence of cohomologies at the end of \ref{var3}.
\end{proof}

\begin{corollary}\label{var6}
	With the previous notations, the following holds:
	\begin{itemize}
		\item[(i)] $\Supp (\check{H}^i(\au,\mathfrak{q},M;n)) \subseteq \Supp M \cap V(\underline{a}A)$ and 
		\item[(ii)] $\Supp (\check{L}^i(\au,\mathfrak{q},M;n)) \subseteq \Supp M \cap V(\underline{a}A)$.
	\end{itemize}
\end{corollary}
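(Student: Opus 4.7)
The plan is to exploit Lemma \ref{var5}: the desired support statements reduce to showing that both $\check{H}^i(\au,\q,M;n)$ and $\check{L}^i(\au,\q,M;n)$ vanish after localization at any prime $\mathfrak{p}$ with $\mathfrak{p} \notin \Supp M \cap V(\underline{a}A)$.

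For (i), fix such a $\mathfrak{p}$ and split into two cases. If $\mathfrak{p} \notin \Supp M$ then $M_\mathfrak{p} = 0$, so the localized complex $\check{C}^{\bullet}(S^{-1}\au, S^{-1}\q, M_\mathfrak{p}; n)$ is identically zero. Otherwise some $a_i \notin \mathfrak{p}$, and since $a_i \in \q^{c_i} \subseteq \q$ the unit $a_i/1 \in A_\mathfrak{p}$ belongs to $\q A_\mathfrak{p}$, forcing $\q A_\mathfrak{p} = A_\mathfrak{p}$. Consequently $(\q^n)_\mathfrak{p} = A_\mathfrak{p}$ for every $n$, and each summand $(\q^n M[\q^{c_{j_1}+\cdots+c_{j_i}}/(a_{j_1}\cdots a_{j_i})])_\mathfrak{p}$ of Notation \ref{var1} coincides with the whole $(M_\mathfrak{p})_{a_{j_1}\cdots a_{j_i}}$. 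Thus the localized complex equals the ordinary \v{C}ech complex of $M_\mathfrak{p}$ against $S^{-1}\au$, and its cohomology computes local cohomology with respect to an ideal containing the unit $a_i/1$, hence vanishes in every degree. Lemma \ref{var5} then gives (i).

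For (ii), localize the long exact sequence of Definition \ref{var3} at $\mathfrak{p}$; by flat base change it remains exact. The same two-case analysis applied to the ordinary \v{C}ech complex (or equivalently the standard inclusion $\Supp H^i_{\au A}(M) \subseteq \Supp M \cap V(\underline{a}A)$) gives $H^i_{\au A}(M)_\mathfrak{p} = 0$ for the chosen $\mathfrak{p}$. Combined with (i), the three-term piece
\[
\check{H}^i(\au,\q,M;n)_\mathfrak{p} \to H^i_{\au A}(M)_\mathfrak{p} \to \check{L}^i(\au,\q,M;n)_\mathfrak{p} \to \check{H}^{i+1}(\au,\q,M;n)_\mathfrak{p}
\]
forces $\check{L}^i(\au,\q,M;n)_\mathfrak{p} = 0$, proving (ii).

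The only delicate step is the identification in (i) of the localized variant \v{C}ech complex with the ordinary one in the case when $\q$ has collapsed to the whole ring; this boils down to checking that the defining condition "$q_i \in \mathfrak{q}^{ic}$" on the expressions $m_0/1 + q_1 m_1/a + \cdots$ becomes vacuous after localization. Once this identification is in hand, the vanishing is immediate from the standard contractibility of the \v{C}ech complex at a unit.
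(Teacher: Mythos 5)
Your proposal is correct and follows essentially the same route as the paper: both reduce the claim to Lemma \ref{var5} together with the observation that after localizing at a prime $\mathfrak{p}\notin V(\underline{a}A)$ the complex becomes exact (you make this explicit by identifying it with the ordinary \v{C}ech complex of $M_{\mathfrak{p}}$ with respect to a sequence containing a unit), while it vanishes outright when $M_{\mathfrak{p}}=0$. The only cosmetic difference is that for (ii) you invoke the long exact sequence of Definition \ref{var3}, where the paper simply says the same reasoning applies to the quotient complex.
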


\begin{proof}
	For the proof of (i), if $\mathfrak{p} \in \Supp (\check{H}^i(\au,\mathfrak{q},M;n))$, then by definition $\check{H}^i(\au,\mathfrak{q},M;n)_{\mathfrak{p}} \not = 0$. Hence $M_{\mathfrak{p}} \not = 0$ by previous lemma \ref{var5}. Also, it is easily seen that $\Supp (\check{H}^i(\au,\mathfrak{q},M;n)) \subseteq V(\underline{a}A)$, since otherwise $\check{C}_{\underline{aT^c}}(R_M(\mathfrak{q}))_n \otimes_A A_{\mathfrak{p}}$ is exact which is not possible.
	
	(ii) follows by the same line of reasoning.	
\end{proof}

\section{Criterion}
In this section, we present a criterion of Cohen-Macaulayness of the form module in terms of (non-)vanishing of $\check{L}^i(\au,\mathfrak{q},M;n)$. To prove the main theorem, we need a few technical results. We begin with the following lemma.

\begin{lemma}\label{c1}
	If $a \in \q^c$ and $b \in A$, then
	\[
	\q^n \overline{M}[\q^c / a] \cong (\q^n M[\q^c / a] + bM_a) / bM_a,
	\]
	where $\overline{M}=M/bM$.
\end{lemma}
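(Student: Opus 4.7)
The plan is to realize both sides of the claimed isomorphism as submodules of the same ambient module, related by the canonical surjection induced from $M \twoheadrightarrow \overline{M}$, and then to appeal to the second isomorphism theorem. Since localization is exact, $\overline{M}_a \cong M_a/bM_a$; I would denote by $\pi \colon M_a \twoheadrightarrow \overline{M}_a$ the resulting surjection.

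The central step will be to verify the equality $\pi(\q^n M[\q^c/a]) = \q^n \overline{M}[\q^c/a]$. The inclusion $\pi(\q^n M[\q^c/a]) \subseteq \q^n \overline{M}[\q^c/a]$ is straightforward: a typical generator $r(m_0/1 + q_1 m_1/a + \cdots + q_s m_s/a^s)$ with $r \in \q^n$, $m_i \in M$, and $q_i \in \q^{ic}$ maps under $\pi$ to $r(\overline{m_0}/1 + q_1 \overline{m_1}/a + \cdots + q_s \overline{m_s}/a^s)$, which is by definition a generator of $\q^n \overline{M}[\q^c/a]$. For the reverse inclusion, I would take an arbitrary generator $r(\overline{m_0}/1 + q_1 \overline{m_1}/a + \cdots + q_s \overline{m_s}/a^s)$ of $\q^n \overline{M}[\q^c/a]$, lift each $\overline{m_i}$ to some $m_i \in M$, and observe that $r(m_0/1 + q_1 m_1/a + \cdots + q_s m_s/a^s)$ already lies in $\q^n M[\q^c/a]$ and is a preimage; extension by $A$-linearity handles arbitrary sums of generators.

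Granted this, the second isomorphism theorem applied to $\pi$ restricted to $\q^n M[\q^c/a]$ identifies $\pi(\q^n M[\q^c/a])$ with $(\q^n M[\q^c/a] + bM_a)/bM_a$, which yields the desired isomorphism with $\q^n \overline{M}[\q^c/a]$. The proof is essentially a bookkeeping exercise; the only point requiring mild care is that the preimages produced in the surjectivity step can be chosen already inside $\q^n M[\q^c/a]$ rather than in some larger submodule of $M_a$, and this is immediate once one picks the lifts $m_i$ componentwise.
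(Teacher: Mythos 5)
Your argument is correct and essentially coincides with the paper's proof: both identify $\overline{M}_a$ with $M_a/bM_a$ via the canonical isomorphism $(m+bM)/a^k \mapsto m/a^k + bM_a$ and check, generator by generator (lifting the $\overline{m_i}$ componentwise in one direction), that $\q^n\overline{M}[\q^c/a]$ corresponds exactly to $(\q^n M[\q^c/a]+bM_a)/bM_a$. The only cosmetic difference is that you package the two containments as ``$\pi$ restricted to $\q^n M[\q^c/a]$ has image $\q^n\overline{M}[\q^c/a]$'' plus the isomorphism theorem, whereas the paper traces elements through a commutative square of inclusions.
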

\begin{proof}
	We have the following diagram
	\[
	\begin{array}{ccc}
	\q^n \overline{M}[\q^c / a] & {\longrightarrow} & \overline{M}_a \\
	&     & \downarrow {\scriptstyle{\cong}} \\
	(\q^n M[\q^c / a] + bM_a) / bM_a & {\longrightarrow} & M_a / bM_a,
	\end{array}
	\] 
	where the horizontal arrows are inclusion maps and the isomorphism is given by $(m + bM) / a^k \mapsto m / a^k + bM_a$.
	If $x \in \q^n \overline{M}[\q^c / a],$ then $x = q_0(m_0 + bM)/1 + q_1(m_1 + bM)/a + \ldots + q_s(m_s + bM)/a^s$ with $m_i \in M$ and $q_i \in \mathfrak{q}^{n+ic}$ for $i = 0,\ldots,s$. By applying above isomorphism, we get
	\begin{align*}
	x &= (q_0m_0/1 + bM_a) + (q_1m_1 / a + bM_a) + \ldots + (q_sm_s / a^s + bM_a) \\
	& = (q_0m_0/1 + q_1m_1 / a + \ldots + q_sm_s / a^s) + bM_a \in (\q^n M[\q^c / a] + bM_a) / bM_a.
	\end{align*}
	The reverse containment follows the same line of reasoning.
\end{proof}

There is another lemma.

\begin{lemma}\label{c2}
	If $a \in \q^c, b \in \q^d$ and $b^\star = b + \q^{d+1}$ is a $G_M(\q)-$regular element, then
	\[
	\q^{n+d} M[\q^c / a] :_{M_a} b = \q^n M[\q^c / a] \text{ for all } n.
	\]
\end{lemma}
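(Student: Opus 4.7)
The plan is to prove the two inclusions separately, with the easy direction coming straight from the definition and the hard direction resting on a standard colon identity in $M$ that follows from $G_M(\q)$-regularity of $b^\star$.

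For the containment $\q^n M[\q^c/a] \subseteq \q^{n+d}M[\q^c/a] :_{M_a} b$, I would take a typical element $y = \sum_{i=0}^{s} q_i m_i/a^i$ with $q_i \in \q^{n+ic}$ and $m_i \in M$, and simply observe that $by = \sum_{i=0}^{s} (bq_i)\, m_i/a^i$ with $bq_i \in \q^{n+d+ic}$, which is exactly of the form required for membership in $\q^{n+d}M[\q^c/a]$.

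For the reverse containment, the key preliminary step is to record the classical colon formula
\[
\q^{N+d}M :_M b = \q^N M \qquad \text{for every } N \ge 0.
\]
This follows from the $G_M(\q)$-regularity of $b^\star$ combined with Krull's intersection theorem $\cap_j \q^j M = 0$ (valid in the paper's local setting). Indeed, if $bm \in \q^{N+d}M$ but $m \notin \q^N M$, pick the largest $k < N$ with $m \in \q^k M$; then $m^\star \ne 0$ in $[G_M(\q)]_k$, while $b^\star m^\star = 0$ in $[G_M(\q)]_{k+d}$, contradicting regularity. A byproduct is that $b$ is $M$-regular, which streamlines the denominator-clearing step below.

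Armed with this, I would take $y \in M_a$ with $by \in \q^{n+d}M[\q^c/a]$, write $y = m/a^s$, and expand
\[
bm/a^s = \sum_{i=0}^{r} p_i m_i'/a^i, \qquad p_i \in \q^{n+d+ic}, \; m_i' \in M.
\]
Clearing denominators in $M_a$ supplies some $k \ge 0$ giving an equality in $M$ which, once the powers of $a$ are absorbed via $a \in \q^c$, reads
\[
b\bigl(m\, a^{r-s+k}\bigr) \in \q^{n+d+(r+k)c} M.
\]
The colon formula with $N = n + (r+k)c$ then forces $m\, a^{r-s+k} \in \q^{n+(r+k)c} M$, and rewriting
\[
y \;=\; m/a^s \;=\; (m\, a^{r-s+k})/a^{r+k}
\]
exhibits $y$ as an element of $\q^n M[\q^c/a]$: all but the degree-$(r+k)$ coefficient vanish, and that coefficient lies in $\q^{n+(r+k)c}$, which is exactly the degree condition imposed on $\q^n M[\q^c/a]$.

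The main technical obstacle I expect is the passage from an equality in the localization $M_a$ back to one in $M$: it forces the auxiliary exponent $k$ and requires careful bookkeeping of how the $\q$-filtration interacts with the powers of $a$ appearing in the denominators. Once the colon identity $\q^{N+d}M :_M b = \q^N M$ is in hand, the remainder of the argument is routine matching of degrees.
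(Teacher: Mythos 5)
Your proof is correct and follows essentially the same route as the paper: both arguments reduce the hard inclusion to the colon identity $\q^{N+d}M :_M b = \q^N M$ (which the paper simply records as equivalent to $G_M(\q)$-regularity of $b^\star$, while you justify it), and then clear denominators in $M_a$ with the same bookkeeping of how $a \in \q^c$ absorbs the extra powers into the $\q$-filtration. The only stylistic remark is that your appeal to Krull's intersection theorem is unnecessary --- the largest $k<N$ with $m \in \q^k M$ exists simply because $m \in \q^0 M$ and $m \notin \q^N M$ --- which is just as well, since the lemma is stated before the paper imposes the local hypothesis.
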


\begin{proof}
Note that $b^\star = b + \q^{d+1}$ is a $G_M(\q)-$regular element if and only if $\q^{n+d}M :_M b = \q^nM$ for all $n$. Now, if $x / a^s \in \q^{n+d} M[\q^c / a] :_{M_a} b,$ then $bx / a^s \in \q^{n+d} M[\q^c / a]$ which can be written as $bx / a^s = q_0m_0/1 + q_1m_1 / a + \ldots + q_tm_t / a^t$ with $m_i \in M$ and $q_i \in \mathfrak{q}^{n+d+ic}$ for $i = 0,\ldots,t$. Hence there exists some non-negative integer $u$ such that
\[
a^{t+u}bx = a^{s+u}(q_0a^tm_0 + q_1a^{t-1}m_1 + \ldots + q_tm_t),
\]
where the right-hand side belongs to $(\mathfrak{q}^{n+d+(s+u)c}) (\mathfrak{q}^c,a)^t M  = \q^{n+d+(s+t+u)c}M$. That is, $a^{t+u}x \in \q^{n+d+(s+t+u)c}M :_M b$. Therefore, by assumption, $a^{t+u}x \in \q^{n+(s+t+u)c}M.$ Finally,
\[
x / a^s = a^{t+u}x / a^{s+t+u} \in \q^n ((\q^c / a)^{s+t+u}M) \subset \q^n M[\q^c / a].
\]
The reverse inclusion is obvious.
\end{proof}

As a consequence of previous two results, we present a result about exact sequence of complexes. For this, note that $m_b: \check{\mathcal{L}}^{\bullet}(\au,\q,M;n) \to \check{\mathcal{L}}^{\bullet}(\au,\q,M;n+d)$ defined as multiplication by $b$ is a morphism of complexes.


\begin{corollary}\label{c3}
		\begin{itemize}
		\item[(1)] With the previous notations, if $b \in \q^d$, then
		the sequence of complexes
		\[
		\check{\mathcal{L}}^{\bullet}(\au,\q,M;n) \stackrel{m_b} \to \check{\mathcal{L}}^{\bullet}(\au,\q,M;n+d) \to \check{\mathcal{L}}^{\bullet}(\au,\q,M/bM;n+d) \to 0
		\]
		is exact, and
		\item[(2)] if additionally $b^\star = b + \q^{d+1}$ is a $G_M(\q)$-regular element, then $m_b$ is an injective morphism of complexes.
		In particular, there is the long exact cohomology sequence
		\[
		\ldots \to \check{L}^i(\au,\q,M;n) \stackrel{b} \to \check{L}^i(\au,\q,M;n+d) \to \check{L}^i(\au,\q,M/bM;n+d) \to \ldots.
		\]
	\end{itemize}
\end{corollary}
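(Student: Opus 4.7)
The plan is to check exactness of the sequence of complexes termwise (degree by degree, summand by summand), reducing each verification to one of the two preceding lemmas, and then to pass to the long exact cohomology sequence in the usual way.

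First I would fix $i$ and a subset $1\leq j_1<\cdots<j_i\leq t$; writing $a=a_{j_1}\cdots a_{j_i}$ and $c=c_{j_1}+\cdots+c_{j_i}$, the corresponding summand of $\check{\mathcal{L}}^i(\au,\q,M;n)$ is $M_a/\q^n M[\q^c/a]$, while that of $\check{\mathcal{L}}^i(\au,\q,M/bM;n+d)$ is $\overline{M}_a/\q^{n+d}\overline{M}[\q^c/a]$ with $\overline{M}=M/bM$. Lemma \ref{c1} identifies this last module with $M_a/(\q^{n+d}M[\q^c/a]+bM_a)$, so the natural projection $M_a/\q^{n+d}M[\q^c/a]\to \overline{M}_a/\q^{n+d}\overline{M}[\q^c/a]$ becomes the further quotient map, whose kernel is $(bM_a+\q^{n+d}M[\q^c/a])/\q^{n+d}M[\q^c/a]$. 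Because $b\in\q^d$ one has $b\cdot\q^n M[\q^c/a]\subseteq \q^{n+d}M[\q^c/a]$, so $m_b$ is well defined on this summand, and its image in $M_a/\q^{n+d}M[\q^c/a]$ is exactly the submodule $(bM_a+\q^{n+d}M[\q^c/a])/\q^{n+d}M[\q^c/a]$. Image and kernel coincide, which is (1).

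For (2), injectivity of $m_b$ on each summand is equivalent to the containment
\[
\q^{n+d}M[\q^c/a]:_{M_a} b \subseteq \q^n M[\q^c/a],
\]
and this is precisely Lemma \ref{c2} under the hypothesis that $b^\star$ is $G_M(\q)$-regular (the reverse inclusion being immediate since $b\in\q^d$). Assembling the termwise injections into a morphism of complexes (a fact already recorded in the paragraph preceding the corollary), parts (1) and (2) together give a short exact sequence of complexes; the associated long exact cohomology sequence is then exactly the displayed sequence of $\check{L}^i$'s.

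The only place where one could imagine trouble is checking that the isomorphisms supplied by Lemma \ref{c1} are compatible with the \v{C}ech differentials. But every arrow in sight---the inclusion of the subcomplex $\check C^{\bullet}(\au,\q,M;n)\hookrightarrow \Cech(M)$, the quotient by $b$, and multiplication by $b$---is manifestly induced from an $A$-linear operation on the ambient \v{C}ech complex $\Cech(M)$, so this compatibility is automatic. The genuine content of the proof is therefore exhausted by Lemmas \ref{c1} and \ref{c2}, and I do not expect any real obstacle beyond them.
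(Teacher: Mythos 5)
Your proposal is correct and follows essentially the same route as the paper: the paper likewise works summand by summand, identifying $\Coker(m_b)$ with $\check{\mathcal{L}}^i(\au,\q,M/bM;n+d)$ via Lemma \ref{c1} and showing $\Ker(m_b)=0$ via Lemma \ref{c2}, then invokes the long exact cohomology sequence. Your image-equals-kernel phrasing and the paper's cokernel/kernel computation are the same argument in different words.
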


\begin{proof}
	For statement (1), first note that
	\[
	\Coker(m_b) \cong \oplus_{1 \leq j_1 < \ldots < j_i \leq t} M_{a_{j_1}\cdots a_{j_i}} / (\mathfrak{q}^{n+d} M[\mathfrak{q}^{c_{j_1}+ \ldots + c_{j_i}}/(a_{j_1}\cdots a_{j_i})] + bM_{a_{j_1}\cdots a_{j_i}}).
	\]
	Now by using lemma \ref{c1}, we get
	\begin{align*}
	\Coker(m_b) &\cong \oplus_{1 \leq j_1 < \ldots < j_i \leq t} (M / bM)_{a_{j_1}\cdots a_{j_i}} / \mathfrak{q}^{n+d} (M / bM [\mathfrak{q}^{c_{j_1}+ \ldots + c_{j_i}}/(a_{j_1}\cdots a_{j_i})]) \\
	& \cong \check{\mathcal{L}}^i(\au,\q,M/bM;n+d)
	\end{align*}
	which completes the argument. 
	
	For statement (2), note that
	\[
	\Ker(m_b) \cong \oplus_{1 \leq j_1 < \ldots < j_i \leq t} \mathfrak{q}^{n+d} M[\mathfrak{q}^{c_{j_1}+ \ldots + c_{j_i}}/(a_{j_1}\cdots a_{j_i})] :_{M_{a_{j_1}\cdots a_{j_i}}} b / \mathfrak{q}^{n} M[\mathfrak{q}^{c_{j_1}+ \ldots + c_{j_i}}/(a_{j_1}\cdots a_{j_i})],
	\]
	which is equal to zero by using assumption and lemma \ref{c2}. Therefore, $\Ker(m_b)$ is a zero complex. This finishes the proof of (2).
\end{proof}

\begin{remark}\label{r1}
	Let $d_i:M\to M_{a_i}$ be the canonical map. For simplicity, we denote $d_i^{-1}(\q^nM[\q^{c_i}/a_i])\cap M$ by $\q^nM[\q^{c_i}/a_i]\cap M$. Then we have $\check{L}^0(\au,\q,M;n)\cong (\cap_{i=1}^t \q^nM[\q^{c_i}/a_i]\cap M) / \q^nM$. Note that $\q^nM[\q^{c_i}/a_i] \cap M = \cup_{k \in \mathbb{N}}\q^{n+kc_i}M : a_i^k = \q^{n+c_il_i}M :_M a_i^{l_i}$ for $l_i\gg0$ and $i=1,\ldots,t$, and hence $\check{L}^0(\au,\q,M;n)\cong \cap_{i=1}^t \q^{n+lc_i}M :_M a_i^l / \q^nM$, where $l:=\max\{l_1,\ldots,l_t\}$.
\end{remark}
Next we have a lemma concerning vanishing of the 0-th cohomology $\check{L}^0(\au,\q,M;n)$. As above, assume that $a_i^\star=a+\q^{c_i+1}$ for $i=1,\ldots,t$ with $a_i\in\q^{c_i}\setminus \q^{c_i+1}$ and $\au^\star=a_1^\star,\ldots,a_t^\star$.

\begin{lemma}\label{c4}
	With the previous notations, $\au^\star G_A(\q)$ contains a $G_M(\q)-$regular element if and only if $\check{L}^0(\au,\q,M;n) = 0$ for all $n \in \mathbb{N}$.
\end{lemma}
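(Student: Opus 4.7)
The plan is to exploit Remark \ref{r1}, which gives the concrete description
\[
\check{L}^0(\au,\q,M;n) \cong \bigcap_{i=1}^t (\q^{n+lc_i}M :_M a_i^l)\, /\, \q^n M
\quad (l \gg 0),
\]
so that a nonzero class is represented by some $x \in M\setminus \q^n M$ with $a_i^l x \in \q^{n+lc_i}M$ for every $i$. The proof then hinges on translating between such an $x$ and its initial form $x^\star \in G_M(\q)$, choosing the degree $n$ carefully.

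For the forward direction, suppose $y \in \au^\star G_A(\q)$ is $G_M(\q)$-regular and assume, for contradiction, that $\check{L}^0(\au,\q,M;n) \neq 0$ for some $n$. Pick $x \in M\setminus\q^n M$ with $a_i^l x \in \q^{n+lc_i}M$ for all $i$ and all $l$ sufficiently large. Let $c$ be the largest integer with $x \in \q^c M$; since $x\notin \q^n M$ we have $c<n$, so $x^\star = x + \q^{c+1}M \in [G_M(\q)]_c$ is nonzero. The inclusion $a_i^l x \in \q^{n+lc_i}M \subseteq \q^{c+1+lc_i}M$ rephrases as $(a_i^\star)^l x^\star = 0$ for each $i$. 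By the standard pigeonhole inclusion $J^{t(l-1)+1} \subseteq ((a_1^\star)^l,\ldots,(a_t^\star)^l)$ applied to $J = \au^\star G_A(\q)$, we conclude $J^{t(l-1)+1} x^\star = 0$, so in particular some power of $y$ kills the nonzero element $x^\star$, contradicting regularity.

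For the converse, suppose $\check{L}^0(\au,\q,M;n) = 0$ for every $n$ but $\au^\star G_A(\q)$ contains no $G_M(\q)$-regular element. Then every element of $\au^\star G_A(\q)$ is a zero-divisor on $G_M(\q)$, so $\au^\star G_A(\q) \subseteq \bigcup_{\mathfrak{P}\in\Ass_{G_A(\q)} G_M(\q)} \mathfrak{P}$. Since $G_M(\q)$ is graded, each such $\mathfrak{P}$ is homogeneous and equals the annihilator of a homogeneous element; prime avoidance yields $\mathfrak{P}_0 \in \Ass_{G_A(\q)} G_M(\q)$ with $\au^\star G_A(\q) \subseteq \mathfrak{P}_0 = \Ann_{G_A(\q)}(y^\star)$ for some nonzero homogeneous $y^\star = y + \q^{c+1}M$ where $y \in \q^c M \setminus \q^{c+1}M$. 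In particular $a_i^\star y^\star = 0$, i.e.\ $a_i y \in \q^{c+c_i+1}M$ for each $i$, and an easy induction using $a_i \in \q^{c_i}$ gives $a_i^l y \in \q^{(c+1)+lc_i}M$ for all $l\geq 1$. Taking $n = c+1$, the element $y$ represents a nonzero class in $\check{L}^0(\au,\q,M;c+1)$ by Remark \ref{r1}, contradicting the assumption.

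The main point to be careful about is the interface between the $\q$-adic filtration of $M$ and the grading of $G_M(\q)$: in each direction one must pass between an element $x\in M$ and its initial form $x^\star$ and, crucially, match the parameter $n$ in Remark \ref{r1} to the initial degree of $x^\star$ (namely $n = c+1$ in the converse). Beyond this bookkeeping, everything reduces to graded prime avoidance, the homogeneity of associated primes of $G_M(\q)$, and the elementary pigeonhole containment for powers of $\au^\star G_A(\q)$.
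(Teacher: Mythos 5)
Your proof is correct and establishes the lemma by the same underlying translation as the paper --- Remark \ref{r1} converts (non)vanishing of $\check{L}^0(\au,\q,M;n)$ into a statement about elements of $M$ sent into high $\q$-powers by powers of the $a_i$, which is then matched against annihilators of the $a_i^\star$ in $G_M(\q)$ --- but you close each direction by a different mechanism. For ``regular element $\Rightarrow$ vanishing'', the paper passes to $((a_1^\star)^k,\ldots,(a_t^\star)^k)$ (same radical, hence it also contains a regular element), reads off $[0:_{G_M(\q)}((a_1^\star)^k,\ldots,(a_t^\star)^k)]_{n-1}=0$ degreewise, and then removes the residual intersection with $\q^{n-1}M$ by an induction on $n$; you instead argue by contradiction, take the initial form $x^\star$ of a witness $x$ in its exact initial degree $c<n$, and use the pigeonhole containment $(\au^\star G_A(\q))^{t(l-1)+1}\subseteq((a_1^\star)^l,\ldots,(a_t^\star)^l)$ to annihilate $x^\star$ by a power of the regular element. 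Choosing $c$ maximal with $x\in\q^cM$ is what replaces the paper's induction on $n$, and it works; the only step worth making explicit is that a power of a $G_M(\q)$-regular element is again regular, so $y^{t(l-1)+1}x^\star=0$ indeed forces $x^\star=0$. For the other direction the two arguments are contrapositives of one another: the paper deduces $0:_{G_M(\q)}\au^\star=0$ directly from $\check{L}^0=0$ and concludes via the usual characterization of zerodivisors, whereas you invoke graded prime avoidance and the homogeneity of associated primes to produce a homogeneous $y^\star$ killed by $\au^\star$ and then exhibit a nonzero class in $\check{L}^0(\au,\q,M;c+1)$. Your route uses slightly heavier (but entirely standard) machinery on the $\Ass$ side and slightly lighter bookkeeping on the filtration side; both are complete.
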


\begin{proof}Note that $\q^nM \subseteq \cap_{i=1}^t \q^{n+c_i}M :_M a_i \subseteq \cap_{i=1}^t \q^nM[\q^{c_i}/a_i]\cap M$. If $\check{L}^0(\au,\q,M;n) = (\cap_{i=1}^t \q^nM[\q^{c_i}/a_i]\cap M)/\q^nM = 0$ for all $n \in \mathbb{N}$, then by using above observation $\q^nM=\cap_{i=1}^t \q^{n+c_i}M :_M a_i$ for all $n$ and hence $[0 :_{G_M(\q)} \au^\star]_{n-1} = (\cap_{i=1}^t \q^{n+c_i}M :_M a_i \cap \q^{n-1}M) / \q^nM = 0$ for all $n$. Therefore $\au^\star G_A(\q)$ contains a $G_M(\q)-$regular element.
	
Conversely, if $\au^\star G_A(\q)$ contains a $G_M(\q)-$regular element, then the ideal $((a_1^\star)^k,\ldots,(a_t^\star)^k) \subseteq G_A(\q)$ also contains a $G_M(\q)-$regular element for all $k$, since $V(a^\star G_A(\q)) = V((a_1^\star)^k,\ldots,(a_t^\star)^k)$. That is, $[0 :_{G_M(\q)} ((a_1^\star)^k,\ldots,(a_t^\star)^k)]_{n-1} = 0$ for all $n$ and $k$. This implies that $\q^nM=\cap_{i=1}^t \q^{n+kc_i}M :_M a_i^k\cap \q^{n-1}M$ and hence it is easy to check, by induction on $n$, that $\q^nM=\cap_{i=1}^t \q^{n+kc_i}M :_M a_i^k$ for all $n$ and $k$. Therefore, by using above remark \ref{r1}, $\check{L}^0(\au,\q,M;n) = 0$ for all $n \in \mathbb{N}$. This finishes the proof.
\end{proof}

Let $\grade(\mathfrak{Q}, G_M(\q))$ be the $G_M(\q)-grade$ of an ideal $\mathfrak{Q}\subseteq G_A(\q)$ which is the length of a maximal homogeneous $G_M(\q)$--sequence in $\mathfrak{Q}$. We present the main result of the section. 

\begin{theorem}\label{c5}
	With the previous notations, $\grade(\au^\star G_A(\q), G_M(\q))$ is the least integer $i$ such that $\check{L}^i(\au,\q,M;n) \not= 0$ for some $n$.
\end{theorem}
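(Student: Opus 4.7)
The plan is to induct on $r = \grade(\au^\star G_A(\q), G_M(\q))$. The base case $r = 0$ is immediate from Lemma \ref{c4}. For the inductive step, assume $r \geq 1$ and select a homogeneous $G_M(\q)$-regular element $b^\star \in \au^\star G_A(\q)$ of degree $d$. Writing $b^\star = \sum f_i^\star a_i^\star$ with $f_i^\star \in G_A(\q)_{d-c_i}$ and lifting each $f_i^\star$ to $\tilde f_i \in \q^{d-c_i}$, I would replace the representative so that $b = \sum \tilde f_i a_i \in \au A$, which still lies in $\q^d \setminus \q^{d+1}$ because $b^\star \neq 0$. Setting $\bar M = M/bM$ and using the standard isomorphism $G_{\bar M}(\q) \cong G_M(\q)/b^\star G_M(\q)$ (valid since $b^\star$ is regular), the grade of $\au^\star G_A(\q)$ on $G_{\bar M}(\q)$ drops to $r - 1$, so the induction hypothesis yields $\check{L}^i(\au,\q,\bar M;n) = 0$ for every $i < r-1$ and $\check{L}^{r-1}(\au,\q,\bar M;n_1) \neq 0$ for some $n_1$.

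Applying Corollary \ref{c3}(2) produces the long exact sequence
\[
\cdots \to \check{L}^{i-1}(\au,\q,\bar M;n+d) \to \check{L}^i(\au,\q,M;n) \stackrel{m_b}{\to} \check{L}^i(\au,\q,M;n+d) \to \check{L}^i(\au,\q,\bar M;n+d) \to \cdots.
\]
For $i < r$ the preceding term vanishes by the induction hypothesis, so $m_b$ is injective on $\check{L}^i(\au,\q,M;n)$ for every $n$; iterating, $m_b^K$ is injective for every $K \geq 1$. Moreover, because $b = \sum \tilde f_i a_i$, we have $bT^d = \sum (\tilde f_i T^{d-c_i})(a_i T^{c_i}) \in \underline{aT^c} R_A(\q)$.

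The decisive step is to identify $\bigoplus_n \check{L}^i(\au,\q,M;n)$ as a local cohomology module over $R_A(\q)$. Summing the short exact sequence of Definition \ref{var3} over $n$ and using $\bigoplus_n \check{C}^\bullet(\au,\q,M;n) \cong \check{C}_{\underline{aT^c}}(R_M(\q))$ from Remark \ref{var2} together with $\bigoplus_n \Cech(M) \cong \check{C}_{\underline{aT^c}}(M[T,T^{-1}])$, the quotient complex is $\check{C}_{\underline{aT^c}}(M[T,T^{-1}]/R_M(\q))$. Taking cohomology yields
\[
\bigoplus_n \check{L}^i(\au,\q,M;n) \cong H^i_{\underline{aT^c} R_A(\q)}\bigl(M[T,T^{-1}]/R_M(\q)\bigr),
\]
which is therefore $\underline{aT^c} R_A(\q)$-torsion. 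Given $x \in \check{L}^i(\au,\q,M;n)$, some power of $\underline{aT^c} R_A(\q)$ annihilates $x$, in particular $(bT^d)^K x = 0$ for $K$ large. Under the identification the action of $(bT^d)^K$ is precisely $m_b^K$, so $m_b^K(x) = 0$, and the injectivity of $m_b^K$ forces $x = 0$. Hence $\check{L}^i(\au,\q,M;n) = 0$ for all $i < r$ and all $n$. Non-vanishing at $i = r$ drops out of the same long exact sequence: for a suitable $n_1$, the connecting map $\check{L}^{r-1}(\au,\q,\bar M;n_1) \hookrightarrow \check{L}^r(\au,\q,M;n_1 - d)$ (using that $\check{L}^{r-1}(\au,\q,M;\cdot)$ vanishes) has nonzero source, so the target is nonzero.

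The principal obstacle is rigorously pinning down the local cohomology identification and verifying that the resulting $R_A(\q)$-action on $\bigoplus_n \check{L}^i(\au,\q,M;n)$ matches the multiplication maps $m_b$ from Corollary \ref{c3}; once this is in place, the Nakayama-style closure (torsion plus iterated injectivity forces vanishing) parallels the classical depth characterization \cite[Corollary 6.2.9]{BS} that the paper follows.
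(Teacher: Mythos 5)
Your proposal is correct and follows essentially the same route as the paper's proof: induction on $\grade(\au^\star G_A(\q), G_M(\q))$ with base case Lemma \ref{c4}, a representative $b\in\au A$ whose initial form is the chosen regular element, the Valabrega--Valla isomorphism $G_{M/bM}(\q)\cong G_M(\q)/b^\star G_M(\q)$, and the long exact sequence of Corollary \ref{c3} combined with injectivity of $m_b$ and $b$-torsion of $\check{L}^i(\au,\q,M;n)$. The only divergence is that the paper gets the torsion statement immediately from Corollary \ref{var6}(ii) (since $\Supp\check{L}^i(\au,\q,M;n)\subseteq V(\au A)$ and $b\in\au A$, every cyclic submodule is killed by a power of $b$), which dispenses with the Rees-ring local cohomology identification you flag as the remaining obstacle.
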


\begin{proof}
	We use induction on $g:=\grade(\au^\star G_A(\q), G_M(\q))$. The case $g=0$ follows by previous lemma \ref{c4}. Now assume $g>0$ and that the result has been proved for every finitely generated $A-$module $N$ with $\grade(\au^\star G_A(\q), G_N(\q))<g$. 
	
	Therefore, there exists $b^\star=b+\q^{d+1} \in \au^\star G_A(\q)$ which is $G_M(\q)-$regular element. Moreover, by using \cite[Lemma 4.2]{K}, we can find an element $b'\in \q^d\setminus\q^{d+1}$ such that $b'^\star=b^\star$ and $b'\in \au A$. Hence, without loss of generality, we may assume that $b\in\au A$. By virtue of \cite[Section 1 and Proposition 2.1]{VV}, $G_M(\q)/b^\star G_M(\q)\cong G_{M/bM}(\q)$ and hence $\grade(\au^\star G_A(\q), G_{M/bM}(\q))=g-1$. Therefore, by inductive hypothesis, $\check{L}^i(\au,\q,M/bM;n+d)=0$ for all $i<g-1$ and for all $n$, while $\check{L}^{g-1}(\au,\q,M/bM;n+d) \not= 0$ for some $n$.
	
	Now, by applying Cor.\ref{c3} and by above computations, we get an exact sequence $0\to \check{L}^{g-1}(\au,\q,M/bM;n+d) \hookrightarrow \check{L}^g(\au,\q,M;n+d)$, which shows that 
	$\check{L}^g(\au,\q,M;n)\not=0$ for some $n$. Also, again by Cor.\ref{c3},
	\[
	\check{L}^i(\au,\q,M;n) \stackrel{b} \longrightarrow \check{L}^i(\au,\q,M;n+d) \text{ for all } i<g
	\]
	is injective. If $m \in \check{L}^i(\au,\q,M;n)$, then there exists an integer $\alpha$ such that $b^\alpha m=0$. Indeed, note that $\Supp(\check{L}^i(\au,\q,M;n)) \subseteq V(\au)$ and $b \in \au A$. But by injectivity, we get $m=0$. This finishes the proof.

\end{proof}

Note that if $\underline{b}=b_1,\ldots,b_l$ is another system of elements of $A$ with $b_i\in\q^{d_i}, d_i\in\mathbb{N},$ for $i=1,\ldots,l$ such that $\Rad \au A=\Rad \underline{b}A$, then $\check{L}^i(\au,\q,M;n)\cong \check{L}^i(\underline{b},\q,M;n)$ for all $i$ and for all $n$, see \cite[Lemma 6.4]{KSch}. But $\check{L}^i(\au,\q,M;n)$ may not be isomorphic to $\check{L}^i(\au,\Rad \q,M;n)$.

\begin{example}
	Let $k$ be a field and $A=k[|t^4,t^5,t^{11}|]\subseteq k[|t|]$, where $t$ is an indeterminate over $k$. Then $A$ is a one-dimensional domain and hence a Cohen-Macaulay ring. Also, note that $A\cong k[|X,Y,Z|]/(X^4-YZ,Y^3-XZ,Z^2-X^3Y^2)$ and $x$ the residue class of $X$ is a parameter of $A$. Hence $\Rad xA=\mathfrak{m}$. Now, $\check{L}^0(x,xA,A;n)=0$ for all $n$, since $(xA)^{n+k}:x^k=(xA)^n$ for all $k$ and for all $n$. On the other hand, $\check{L}^0(x,\mathfrak{m},A;n)\not=0$, for some $n$, by using above theorem \ref{c5}, since $G_A(\mathfrak{m})\cong k[X,Y,Z]/(XZ,YZ,Y^4,Z^2)$ is not Cohen-Macaulay (see \cite[Section 6]{V} for details).
\end{example}

From now on, we assume that $(A,\mathfrak{m})$ is a Noetherian local ring. We denote by $\mathfrak{M}=\mathfrak{m}/\q \oplus (\oplus_{n > 0}\q^n/\q^{n+1})$ the unique maximal homogeneous ideal of $G_A(\q)$. Then, $\grade(\mathfrak{M}, G_M(\q))$ is denoted by $\depth G_M(\q)$ and referred as the \emph{depth} of $G_M(\q)$. Note that if $\au^\star=a_1^\star,\ldots,a_t^\star$ is a system of parameters of $G_M(\q)$, then $\grade(\au^\star G_A(\q), G_M(\q))=\depth G_M(\q)$. 

\begin{corollary}\label{c6}
	With the previous notations, if $a_1^\star,\ldots,a_t^\star$ is a system of parameters of $G_M(\q)$, then any integer $i\geq0$ for which $\check{L}^i(\au,\mathfrak{q},M;n)\not=0$, for some $n$, must satisfy
	\[
	\depth G_M(\q)\leq i\leq \dim G_M(\q).
	\]
	Moreover, $\check{L}^i(\au,\mathfrak{q},M;n)\not=0$, for some $n$, for $i=\depth G_M(\q)$ or $i= \dim G_M(\q)$.
\end{corollary}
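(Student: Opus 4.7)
The plan is to split the statement into three pieces: vanishing $\check{L}^i(\au,\q,M;n) = 0$ for $i > \dim G_M(\q)$; vanishing for $i < \depth G_M(\q)$ together with non-vanishing at $i = \depth G_M(\q)$; and non-vanishing at $i = \dim G_M(\q)$.

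The first two pieces are quick. For the upper bound, the sop hypothesis forces $t = \dim G_M(\q)$, so the \v{C}ech complex $\check{C}_{\underline a}(M)$, its subcomplex $\check{C}^\bullet(\au,\q,M;n)$, and hence the quotient $\check{\mathcal{L}}^\bullet(\au,\q,M;n)$, all vanish above cohomological degree $t$. For the lower bound together with non-vanishing at $\depth G_M(\q)$, the sop condition makes $\au^\star G_A(\q)$ and $\mathfrak{M}$ cut out the same closed subset of $\Supp G_M(\q)$, so $\grade(\au^\star G_A(\q), G_M(\q)) = \depth G_M(\q)$, and Theorem \ref{c5} identifies this integer as the least $i$ with $\check{L}^i(\au,\q,M;n) \ne 0$ for some $n$.

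The heart of the argument is non-vanishing at $i = \dim G_M(\q) = t$. I would first observe that $\au$ is itself a system of parameters of $M$: the natural surjection $G_M(\q)/\au^\star G_M(\q) \twoheadrightarrow G_{M/\au M}(\q)$ combined with $\dim N = \dim G_N(\q)$ from Notation \ref{not-1}(C) forces $\dim M/\au M = 0$, whence $\sqrt{\au A + \Ann M} = \sqrt{\mathfrak{m} + \Ann M}$ and $H^t_{\au A}(M) \cong H^t_{\mathfrak{m}}(M) \ne 0$ by Grothendieck's non-vanishing theorem. Since $\check{H}^{t+1}(\au,\q,M;n) = 0$, the long exact sequence of Definition \ref{var3} at cohomological degree $t$ collapses to
\[
\check{H}^t(\au,\q,M;n) \to H^t_{\au A}(M) \to \check{L}^t(\au,\q,M;n) \to 0,
\]
so it suffices to exhibit an $n$ for which the first map is not surjective.

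My plan is to show it is in fact zero for $n$ sufficiently large. A class in $\check{H}^t(\au,\q,M;n)$ admits a representative $y = \sum_{k=0}^{s} q_k m_k/(a_1\cdots a_t)^k$ with $q_k \in \q^{n+kc}$, where $c = c_1 + \cdots + c_t$, and under the direct-limit description $H^t_{\au A}(M) \cong \varinjlim_\ell M/(a_1^\ell, \ldots, a_t^\ell) M$ its image is the class of $\sum_k (a_1\cdots a_t)^{s-k} q_k m_k$ in $M/(a_1^s, \ldots, a_t^s) M$. Each such summand lies in $\q^{n+sc} M$, so the image is zero as soon as $\q^{n+sc} M \subseteq (a_1^s,\ldots,a_t^s) M$ holds uniformly in $s$. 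Starting from the sop relation $\mathfrak{m}^{N_1} \subseteq \au A + \Ann M$ and iterating with a pigeonhole estimate on the weights $c_i$, I expect to obtain $\mathfrak{m}^{N_s} \subseteq (a_1^s,\ldots,a_t^s) + \Ann M$ with $N_s - sc$ bounded in $s$, so that taking $n \ge \sup_s(N_s - sc)$ makes the image of $\check{H}^t(\au,\q,M;n)$ in $H^t_{\au A}(M)$ trivial and hence $\check{L}^t(\au,\q,M;n) \cong H^t_{\mathfrak{m}}(M) \ne 0$ for $n \gg 0$. The principal technical obstacle is establishing this uniform-in-$s$ Briançon--Skoda-type bound from the sop structure of $\au$ and the weights $a_i \in \q^{c_i}$.
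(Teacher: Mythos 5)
The architecture is sound and the first two pieces are fine: the degree bound $\check{L}^i=0$ for $i>t$ is immediate from the length of the complex, and the lower bound plus non-vanishing at $i=\depth G_M(\q)$ is exactly the paper's route via Theorem \ref{c5} and the identity $\grade(\au^\star G_A(\q),G_M(\q))=\depth G_M(\q)$. Where you diverge is the top degree: the paper simply cites \cite[Theorem 9.5]{KSch} for non-vanishing at $i=t$, while you attempt a direct proof. Your reduction is correct as far as it goes ($\au$ is a sop of $M$, so $H^t_{\au A}(M)\cong H^t_{\mathfrak m}(M)\neq 0$, and $\check{L}^t(\au,\q,M;n)$ is the cokernel of $\check{H}^t(\au,\q,M;n)\to H^t_{\au A}(M)$), and the image of a class $\sum_k q_km_k/(a_1\cdots a_t)^k$ does land in the image of $\q^{n+sc}M$ in $M/(a_1^s,\ldots,a_t^s)M$. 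So everything hinges on the uniform containment $\q^{n_0+sc}M\subseteq(a_1^s,\ldots,a_t^s)M$ for all $s$, which you explicitly leave open.

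That is a genuine gap, and moreover the specific derivation you propose will not close it. Starting from $\mathfrak m^{N_1}\subseteq \au A+\Ann M$ (i.e.\ only from $\au$ being a sop of $M$) and applying pigeonhole to $(\au A)^{t(s-1)+1}\subseteq(a_1^s,\ldots,a_t^s)$ yields $N_s\approx stN_1$, and $N_s-sc\approx s(tN_1-\sum c_i)$ diverges since $tN_1>\sum c_i$ in all but degenerate cases; the $\mathfrak m$-primary relation simply does not see the weights $c_i$. The hypothesis you must use is that $\au^\star$ is a sop of $G_M(\q)$, which gives $[G_M(\q)/\au^\star G_M(\q)]_N=0$ for $N\geq N_0$, i.e.\ $\q^NM=\sum_i a_i\q^{N-c_i}M+\q^{N+1}M$, and hence by Krull intersection $\q^NM=\sum_i a_i\q^{N-c_i}M$ for $N\geq N_0$. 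Iterating \emph{this} relation, expanding a term $a^\alpha\q^{N-\langle\alpha,c\rangle}M$ only while $\alpha_i\leq s-1$ for all $i$ (so that $\langle\alpha,c\rangle\leq(s-1)\sum c_i$ and the exponent stays $\geq N_0$), pigeonhole on multi-indices gives $\q^{N_0+(s-1)c}M\subseteq(a_1^s,\ldots,a_t^s)M$ for every $s$, which is exactly the bound you need with $n_0=N_0-c$. With that lemma supplied, your argument goes through and in fact yields the stronger conclusion $\check{L}^t(\au,\q,M;n)\cong H^t_{\mathfrak m}(M)$ for $n\gg 0$; without it, the proof of the "$i=\dim G_M(\q)$" clause is incomplete.
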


\begin{proof}
Note that if $a_1^\star,\ldots,a_t^\star$ is a system of parameters of $G_M(\q)$, then $a_1,\ldots,a_t$ is a system of parameters of $M$. Therefore, $\check{L}^i(\au,\mathfrak{q},M;n)$, for some $n$, is non-zero at $i=t$ and zero afterwards, see \cite[Theorem 9.5]{KSch}. The rest follows from above Theorem \ref{c5}.
\end{proof}

As a consequence of the previous result, we have a following criterion of Cohen-Macaulayness of the form module.
\begin{corollary}\label{c7}
With the previous notation, if $a_1^\star,\ldots,a_t^\star$ is a system of parameters of $G_M(\q)$, then there is only one integer $i$ for which $\check{L}^i(\au,\mathfrak{q},M;n)\not=0$, for some $n$, if and only if $G_M(\q)$ is a Cohen-Macaulay $G_A(\q)-$module.
\end{corollary}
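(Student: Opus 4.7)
The plan is to deduce the corollary directly from Corollary \ref{c6}, which has already sandwiched the non-vanishing integers between $\depth G_M(\q)$ and $\dim G_M(\q)$ and guaranteed non-vanishing at both endpoints. Nothing beyond this needs to be reproved.

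First I would record the standard fact that a finitely generated module $N$ over a Noetherian local(ized) graded ring with unique maximal homogeneous ideal $\mathfrak M$ is Cohen-Macaulay if and only if $\depth N = \dim N$, applied to $N = G_M(\q)$ over $G_A(\q)$ with the maximal homogeneous ideal $\mathfrak{M}$ defined just before Corollary \ref{c6}. Since we are assuming $a_1^\star,\ldots,a_t^\star$ is a system of parameters of $G_M(\q)$, we have in particular $t = \dim G_M(\q)$, and the hypotheses of Corollary \ref{c6} are in force.

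Next I would invoke Corollary \ref{c6}: let $S := \{i \geq 0 \mid \check{L}^i(\au,\q,M;n)\neq 0 \text{ for some } n\}$. Then $S \subseteq [\depth G_M(\q),\,\dim G_M(\q)]$, and moreover both $\depth G_M(\q)\in S$ and $\dim G_M(\q)\in S$.

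For the forward direction, assume $G_M(\q)$ is Cohen-Macaulay. Then $\depth G_M(\q) = \dim G_M(\q)$, so the containing interval collapses to a single integer, hence $|S|=1$. For the converse, assume $|S|=1$. Since $S$ already contains the two (a priori possibly distinct) integers $\depth G_M(\q)$ and $\dim G_M(\q)$, these must coincide, so $G_M(\q)$ is Cohen-Macaulay.

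Honestly, there is no main obstacle here: the heavy lifting is entirely in Theorem \ref{c5} (giving $\depth G_M(\q)$ as the least non-vanishing index) and Corollary \ref{c6} (giving $\dim G_M(\q) = t$ as a non-vanishing index and as the upper bound). The only thing to be careful about is making the logical pinch explicit, namely that both endpoints of the a priori interval are known to lie in $S$, so $S$ is a singleton exactly when the endpoints agree, which is exactly the Cohen-Macaulay condition.
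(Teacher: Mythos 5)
Your proposal is correct and matches the paper's intent exactly: the paper gives no separate proof of Corollary \ref{c7}, presenting it as an immediate consequence of Corollary \ref{c6}, and your argument (both endpoints $\depth G_M(\q)$ and $\dim G_M(\q)$ lie in the set of non-vanishing indices, which is contained in the interval between them, so the set is a singleton precisely when depth equals dimension) is the intended deduction. You are also right to read the ``or'' in Corollary \ref{c6} as giving non-vanishing at \emph{both} endpoints, since Theorem \ref{c5} supplies the lower endpoint and \cite[Theorem 9.5]{KSch} the upper one, and the converse direction genuinely needs both.
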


\end{document}